\newtheorem*{theorem*}{Theorem}
\newtheorem*{proposition*}{Proposition}
\newtheorem*{corollary*}{Corollary}
\newcommand{\GEN}[1]{\left\langle #1 \right\rangle}
\newcommand{\xt}{\widetilde{x}}
\newcommand{\yt}{\widetilde{y}}
\newcommand{\zt}{\widetilde{z}}
\newcommand{\qand}{\quad \text{and} \quad}
\title[	]{Non-isomorphic $2$-groups with \\ isomorphic modular group algebras}
\author{Diego Garc\'{\i}a-Lucas, Leo Margolis and \'{A}ngel del R\'{\i}o}
\thanks{This research is partially supported by Grant 19880/GERM/15 funded by Fundaci\'{o}n S\'{e}neca of Murcia, and Grant PID2020-113206GB-I00 funded by MCIN/AEI/10.13039/501100011033.}
\address{D. García-Lucas, \'{A}. del R\'{i}o: Departamento de Matem\'{a}ticas, Universidad de Murcia, 30100, Murcia, Spain. \rm{diego.garcial@um.es, adelrio@um.es}}
\address{L. Margolis: Vrije Universiteit Brussel, Department Wiskunde, Pleinlaan 2, 1040 Brussels, Belgium, \rm{leo.margolis@vub.be}}
\keywords{Group rings, Modular Isomorphism Problem, modular group algebra}
\subjclass{16S34, 16U60, 20C05, 20D15}
\begin{document}
\maketitle

\begin{abstract}
We provide non-isomorphic finite $2$-groups which have isomorphic group algebras over any field of characteristic $2$, thus settling the Modular Isomorphism Problem. 
\end{abstract}

\section{Introduction}
A classical question, stated in the influential survey by Richard Brauer \cite{Bra63}, asks: ``When do nonisomorphic groups have isomorphic group algebras?'' Denoting by $RG$ the group ring of a group $G$ over a ring $R$, the more general question for group rings asks:

\begin{quote}
{\bf The Isomorphism Problem}: Does a ring isomorphism between $RG$ and $RH$ imply a group isomorphism between $G$ and $H$?
\end{quote}

The most classical version of the Isomorphism Problem which remained open until now, also mentioned in \cite[p. 166]{Bra63}, is:

\begin{quote}
{\bf The Modular Isomorphism Problem}: Let $G$ and $H$ be finite $p$-groups and $k$ a field of characteristic $p$. Does an isomorphism $kG\cong kH$ of rings imply an   isomorphism $G\cong H$ of groups?
\end{quote}

Here we show:

\begin{theorem*}
  There are non-isomorphic finite 2-groups $G$ and $H$ such that the group rings of $G$ and $H$ over any field of characteristic 2 are isomorphic. In particular, the Modular Isomorphism Problem has a negative answer.
\end{theorem*}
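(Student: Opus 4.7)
Since for every field $k$ of characteristic $2$ we have $kG \cong k \otimes_{\F_2} \F_2 G$, it suffices to exhibit two non-isomorphic finite $2$-groups $G$ and $H$ together with an $\F_2$-algebra isomorphism $\F_2 G \cong \F_2 H$; the conclusion for arbitrary $k$ is then obtained by base change. My plan is therefore wholly constructive: describe $G$ and $H$ by explicit power-commutator presentations, define a ring homomorphism $\varphi\colon \F_2 G \to \F_2 H$ generator by generator, check that each defining relation of $G$ is satisfied by the images, and deduce bijectivity from $\dim_{\F_2}\F_2 G = |G| = |H| = \dim_{\F_2}\F_2 H$ together with surjectivity.

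To find a viable pair $(G,H)$, I would follow the blueprint that the authors have used successfully for odd primes: start with a metabelian $2$-group having a normal abelian subgroup and a small complement acting on it, and then perturb a single central commutator or power relation to obtain a ``twin''. The perturbation must be invisible to every invariant currently known to be determined by the modular group algebra of a $2$-group --- among them the order, exponent, center, abelianization, the isomorphism types of the successive quotients of the Jennings and lower central series, the Lie-algebra structure of the associated graded, and the invariants extracted from small quotients by powers of the augmentation ideal --- while still changing the isomorphism type of the group. The natural candidate for $\varphi$ sends each generator $g_i$ of $G$ to the corresponding generator $h_i$ of $H$, multiplied by a correction of the form $1+z_i$ with $z_i$ in the augmentation ideal of $\F_2 H$; such factors are automatically units in $\F_2 H$, and their role is to absorb in characteristic $2$ the discrepancy introduced by the perturbation.

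The main obstacle is unquestionably the first step, namely selecting $(G,H)$ and the correction terms $z_i$. After decades of positive answers --- for $2$-groups of order at most $2^8$, for metacyclic $2$-groups, for groups of maximal class, and many others --- any counterexample must simultaneously elude a long list of invariants, so its construction is delicate and almost certainly computer-assisted. Once the pair is in place, the two remaining tasks should reduce to bounded calculations: proving $G \not\cong H$ by exhibiting a fine group-theoretic invariant not shared by the two groups, typically a characteristic quotient sensitive to the perturbed commutator or power, and proving that $\varphi$ is an isomorphism by expanding each relation of $G$ in $\F_2 H$ using $(1+z)(1+w)=1+z+w+zw$ and checking the cancellations modulo $2$. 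The technical difficulty of the proof is thus expected to sit almost entirely in the search for the right pair of groups, not in the algebraic verification once they are found.
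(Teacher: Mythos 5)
Your proposal correctly identifies the overall architecture that the paper also uses --- reduce to $k=\mathbb{F}_2$ by base change, exhibit inside $kH$ a set of elements of the form ``group element times a unit congruent to $1$ modulo the augmentation ideal'' satisfying the defining relations of $G$, and conclude by comparing $|G|=|H|=\dim_k kH$ --- but it stops exactly where the theorem begins. You never produce the pair $(G,H)$, never write down the correction units $1+z_i$, and never carry out the verification of the relations or the non-isomorphism of the groups; you explicitly defer all of this to a search you do not perform. Since the entire mathematical content of the statement is the existence of such a pair together with such units, what you have written is a description of what a proof would look like, not a proof. For the record, the paper takes, for $n>m>2$,
$G=\langle x,y,z \mid z=[y,x],\, x^{2^n}=y^{2^m}=z^4=1,\, z^x=z^{-1},\, z^y=z^{-1}\rangle$ and
$H=\langle a,b,c \mid c=[b,a],\, a^{2^n}=b^{2^m}=c^4=1,\, c^a=c^{-1},\, c^b=c\rangle$;
these are distinguished by the exponent of the centralizer of the derived subgroup ($2^n$ versus $2^{n-1}$), and the group basis of $kH$ isomorphic to $G$ is generated by $\widetilde{x}=a$ and $\widetilde{y}=b(a+b+ab)c$, where $a+b+ab=1+(1+a)(1+b)$ is precisely a unit of the shape you predict. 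The verification that $\widetilde{y}^2$ is central, that $\widetilde{y}^{2^m}=1$, and that $[\widetilde{y},\widetilde{x}]^4=1$ (the last via the ideal generated by $c+1$, whose fourth power vanishes) is where the real work lies, and none of it is present in your text.

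Two smaller factual points. First, there is no ``blueprint used successfully for odd primes'': the Modular Isomorphism Problem remains open for groups of odd order, so that analogy cannot guide the construction. Second, your expectation that the counterexample is ``almost certainly computer-assisted'' is contradicted by how it was actually found --- the authors state the groups were obtained by theoretical considerations guided by the classification of two-generated cyclic-by-abelian groups, with software used only to check computations afterwards. Neither point is fatal to your outline, but both reinforce that the heart of the matter is the explicit construction you have omitted.
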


This also solves another old open problem (see e.g. \cite[p.258]{Scott1990}, \cite[Question 1.1]{NavarroSambale}, \cite[Question 4.11]{Linckelmann2018}):

\begin{corollary*}
  There are isomorphic 2-blocks of finite groups with non-isomorphic defect groups. In particular, the defect group of a 2-block is not determined by its Morita equivalence class over a finite field of characteristic 2.
\end{corollary*}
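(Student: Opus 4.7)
The plan is to deduce the corollary directly from the theorem just stated, with no further hard work required. I would fix a finite field $k$ of characteristic $2$ and take $G$ and $H$ to be the non-isomorphic finite $2$-groups provided by the theorem, so that $kG\cong kH$ as $k$-algebras.

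The key observation I would invoke is that when $P$ is a finite $p$-group and $k$ is a field of characteristic $p$, the group algebra $kP$ is local: the augmentation ideal is nilpotent and is the unique maximal ideal. Consequently $kP$ has a single block, namely the principal block, which coincides with all of $kP$. A standard computation in block theory then identifies the defect group of this unique block as a Sylow $p$-subgroup of $P$, i.e.\ $P$ itself.

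Applying this remark to both $G$ and $H$, I obtain two blocks $kG$ and $kH$ (of the finite groups $G$ and $H$ respectively) that are isomorphic as $k$-algebras, hence in particular Morita equivalent, yet whose defect groups $G$ and $H$ are non-isomorphic. This settles both assertions of the corollary at once. I do not foresee any real obstacle: the substantive mathematical content is already contained in the theorem, and the corollary is obtained simply by unpacking the definitions of \emph{block} and \emph{defect group} in the special case of $p$-groups, where they reduce to trivialities.
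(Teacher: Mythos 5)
Your proposal is correct and is exactly the (implicit) argument the paper intends: since $kP$ is local for a $p$-group $P$ in characteristic $p$, the whole group algebra is the unique block with defect group $P$ itself, so the theorem immediately yields isomorphic (hence Morita equivalent) blocks with non-isomorphic defect groups. The paper gives no separate proof of the corollary, and your unpacking of the definitions is the standard and intended route.
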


This puts an end to a long and rich history. It is clear that the complex group ring of a finite group only determines the multiset of irreducible character degrees, so for example all abelian groups of the same order have isomorphic complex group rings. While for $R$ the field of rationals the Isomorphism Problem has a positive solution for abelian groups \cite{PW50}, the two non-abelian groups of order $p^3$ for $p$ an odd prime have isomorphic group rings over the rationals.
 Much more interesting is the case of integral group rings. A positive solution for the Isomorphism Problem for finite abelian groups and $R$ the ring of integers appeared already in the thesis of G.~Higman \cite{HigmanThesis,Hig40}. 
Many other positive results have been proven for concrete classes of finite groups including metabelian or supersolvable groups, for $R$ the ring of integers \cite{Whitcomb,KimmerleHabil}, or $p$-groups, for $R$ the ring of integers of a $p$-adic field \cite{RoggenkampScott1987, Weiss1988, Roggenkamp92}.

However, some striking negative answers showed that even though group rings carry significant information about the source group, for most classes of groups and rings the Isomorphism Problem has a negative solution. For example, D.~Passman showed that for every prime $p$ there are at least $p^{2(n^3-23n^2)/27}$ non-isomorphic groups of order $p^n$ with isomorphic group algebras over every field of characteristic different from $p$ \cite{Passman65}. Moreover, E.~Dade exhibited a series   of pairs of metabelian non-isomorphic finite groups with isomorphic group algebras over all coefficient fields \cite{Dade71}. The latter solved Problem 2* in Brauer's survey, while the former showed the relevance of the characteristic of the coefficient ring. 
After these two negative results Passman wrote in 1977: ``There are, however, two glimmers of hope. The first concerns integral group rings, and the second concerns $p$-groups over $GF(p)$'' \cite[Page~664]{Pas77}. 
The first ``glimmer of hope'' faded away when M.~Hertweck found two non-isomorphic solvable groups with isomorphic group rings over the integers, and hence with isomorphic group rings over every ring \cite{Her01}. The second ``glimmer of hope'' was the Modular Isomorphism Problem. Note that the groups of Hertweck and Dade mentioned above have order divisible by two different primes. 

The study of modular group algebras of finite $p$-groups can be traced back at least to S.~Jennings who gave a group-theoretical description of the dimension subgroups \cite{Jen41}. The Modular Isomorphism Problem received considerable attention and positive solutions include, but are not limited to, groups which are abelian \cite{Deskins1956}, have trivial third dimension subgroup \cite{PS72}, $2$-groups of maximal class \cite{Carlson}, metacyclic \cite{BaginskiMetacyclic,San96}, of class 2 with elementary abelian derived subgroup \cite{San89}, or (elementary abelian)-by-cyclic \cite{Bag99}, as well as for groups of order $p^{n}$ with $n\le 5$ \cite{Passman1965p4, Makasikis, SalimSandlingp5} or $2^n$ with $n\le 8$ \cite{Wursthorn1993,BKRW99,Eick08,MM20}. 
However the Modular Isomorphism Problem resisted a solution so far. 
Surveys on the problem and related questions include \cite{San84, Bov98}, while it was also discussed in \cite{Bov74, Pas77, Seh78}.
Overviews of results known at the time are included in \cite{HS06,EK11}. More recent results appeared in \cite{BK19, Sakurai, BdR20, MargolisStanojkovski} 

In many ways our examples of $2$-groups with non-isomorphic group rings over any field of characteristic $2$ are minimal as they lie close to several classes of groups for which the Modular Isomorphism Problem is known to have a positive answer: Our groups are $2$-generated groups of nilpotency class $3$ with a derived subgroup which is cyclic of order $4$ and their orders take all the values $2^t$ for $t \ge 9$. Now the Modular Isomorphism Problem is known to have a positive answer for metacyclic groups \cite{BaginskiMetacyclic, San96}, for $2$-generated groups of nilpotency class $2$ \cite{BdR20}, and for $2$-generated groups of nilpotency class $3$ with elementary abelian derived subgroup \cite{MM20} (based on \cite{San89, Bag99}). Using an algorithm developed in \cite{Eick08} it was also shown to hold for groups of order $2^8$ \cite{MM20}. In \cite{EK11} the same algorithm was used to claim a positive answer also for groups of order $2^9$, but as pointed out in \cite{MM20} the programs used in the proof contained a flaw. It is worth to mention that our groups were found without the use of computers; the inspirations was theoretical and guided by the classifications of two-generated cyclic-by-abelian groups in terms of numerical invariants in \cite{OsnelDiegoAngel}. Still, the LAGUNA package of GAP \cite{LAGUNA} was very useful to perform calculations inside the group rings.

The solution of the Modular Isomorphism Problem also has consequences for Modular Representation Theory, particularly for the Theory of Blocks, such as the corollary given above. Together with the result mentioned above \cite{Roggenkamp92}, it also implies that Morita equivalence of $p$-blocks does not imply Morita equivalence over the ring of integers of a $p$-adic splitting field of the block, a question mentioned in \cite[Question 4.9]{Linckelmann2018} and \cite[p.94]{Craven}. Other applications could follow in particular from an analysis of our examples.

The Modular Isomorphism Problem remains open for various interesting classes of $p$-groups, including groups of nilpotency class 2 and groups of odd order.

\section{The groups}

We use standard group theoretical notation as for example $g^h = h^{-1}gh$ and $[g,h]=g^{-1}h^{-1}gh$, for $g$ and $h$ group elements. The Frattini subgroup of a group $G$ is denoted by $\Phi(G)$, the center by $Z(G)$, the derived subgroup by $G'$ and the centralizer in $G$ of a subset $X$ is denoted by $C_G(X)$.

Let $n$ and $m$ be integers satisfying $n>m>2$ and consider the groups given by the following presentations:
\begin{eqnarray*}
G&=&\GEN{x,y,z \ \mid \ z=[y,x], \ x^{2^n}=y^{2^m}=z^4=1, \ z^x=z^{-1}, \ z^y=z^{-1}}	\\
H&=&\GEN{a,b,c \  \mid \ c=[b,a], \ a^{2^n}=b^{2^m}=c^4=1, \ c^a=c^{-1}, \ c^b=c}
\end{eqnarray*}
The smallest admissible values, $n=4$ and $m=3$, correspond to the groups identified in the library of small groups of GAP \cite{GAP, SmallGroupLibrary} as [512, 456] and [512, 453], respectively.  Our theorem is a direct consequence of the following:

\begin{proposition*}
The groups $G$ and $H$ are non-isomorphic but if $k$ is a field of characteristic $2$ then the group algebras $kG$ and $kH$ are isomorphic.
\end{proposition*}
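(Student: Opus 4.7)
The proposition makes two claims: $G$ and $H$ are non-isomorphic, and $kG \cong kH$ as $k$-algebras. For the non-isomorphism, I plan to compute the centralizer of the derived subgroup, a characteristic subgroup, and show it differs between $G$ and $H$. Since $G' = \langle z \rangle$ and $H' = \langle c \rangle$ are both cyclic, these centralizers are $C_G(z)$ and $C_H(c)$. Using the normal forms $x^iy^jz^k$ and $a^ib^jc^k$, the given relations yield $z^{x^iy^jz^k} = z^{(-1)^{i+j}}$ (since both $x$ and $y$ invert $z$) while $c^{a^ib^jc^k} = c^{(-1)^i}$ (since $a$ inverts $c$ but $b$ centralizes it). Hence $C_G(z) = \{x^iy^jz^k : i+j \text{ even}\}$ and $C_H(c) = \{a^ib^jc^k : i \text{ even}\}$, both abelian of order $2^{n+m+1}$. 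Direct computation (using $(xy)^2 = x^2y^2z^{-1}$ and the fact that $x^2,y^2 \in Z(G)$) gives
\[
C_G(z) \cong C_{2^n} \times C_{2^{m-1}} \times C_4 \quad\text{and}\quad C_H(c) \cong C_{2^{n-1}} \times C_{2^m} \times C_4,
\]
which have different largest invariant factors since $n > m$. Thus $C_G(G') \not\cong C_H(H')$, and so $G \not\cong H$.

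For the isomorphism $kG \cong kH$, since $G$ is $2$-generated with $z = [y,x]$, by the universal property of group algebras it suffices to exhibit units $\tilde x, \tilde y \in (kH)^\times$ satisfying the defining relations of $G$. My plan is to set $\tilde x = a$ and construct $\tilde y$ as a modular deformation of $b$; the induced homomorphism $\phi \colon kG \to kH$ is then an isomorphism by dimension comparison ($2^{n+m+2}$ on each side) once surjectivity is established, which follows from a filtration argument recovering $b$ and $c$ in the image. The main obstacle is constructing $\tilde y$, since no element of $H$ works: any $h \in H$ inverting $c$ must involve an odd power of $a$, forcing its order to be at least $2^n > 2^m$. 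Hence $\tilde y$ must exploit the augmentation ideal $I(kH)$ and its nilpotent elements---for instance $1+c$, which satisfies $(1+c)^4 = 0$---a possibility intrinsic to the modular setting.

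I would try the ansatz $\tilde y = b + \alpha$ with $\alpha \in I(kH)^2$, chosen so that $c\tilde y = \tilde y c^{-1}$. In characteristic $2$, this translates to $c\alpha + \alpha c^{-1} = b(c + c^{-1})$, and using that $\tau := c + c^{-1}$ is central with $\tau^2 = 0$, the condition reduces to $b + \alpha_{\mathrm{even}} \in \operatorname{Ann}(\tau) = kH \cdot (1+c^2)$, where $\alpha_{\mathrm{even}}$ is the part of $\alpha$ supported on elements with even $a$-exponent; one concrete partial solution is $\alpha_{\mathrm{even}} = bc^2$ together with an $a$-odd correction preserving augmentation $1$. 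The technically most delicate step is then the order relation $\tilde y^{2^m} = 1$: successive squarings of $\tilde y$ must telescope, with the $a$-contributions vanishing after $m$ iterations thanks to the nilpotence of $(1+c)$ and $(1+a)$ in $kH$. Pinning down $\alpha$ so that $\tilde y^{2^m}=1$, $\tilde z^4=1$, $\tilde z^a=\tilde z^{-1}$, and $\tilde z^{\tilde y}=\tilde z^{-1}$ (where $\tilde z = [\tilde y, a]$) all hold simultaneously is where the authors' LAGUNA-assisted computations become essential. Once such $\tilde y$ is found, surjectivity of $\phi$ follows from showing $\{\phi(x)^i\phi(y)^j\phi(z)^k\}$ is a $k$-basis of $kH$ via triangularity with respect to the Jennings filtration.
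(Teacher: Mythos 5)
Your first half is fine: distinguishing $G$ and $H$ by the characteristic subgroup $C_G(G')$ is exactly the paper's argument (the paper just compares exponents, $2^n$ versus $2^{n-1}$, rather than full invariant factor decompositions), and your outline for the second half also matches the paper's strategy: take $\widetilde{x}=a$ and a unit $\widetilde{y}\equiv b \bmod I^2$ satisfying the defining relations of $G$, deduce surjectivity of $kG\to kH$ from generation of $I/I^2$, and conclude by comparing dimensions.

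However, there is a genuine gap at the heart of the argument: you never actually produce $\widetilde{y}$, and you explicitly defer the simultaneous verification of $\widetilde{y}^{\,2^m}=1$, $\widetilde{z}^{\,4}=1$, $\widetilde{z}^{\,a}=\widetilde{z}^{\,-1}$ and $\widetilde{z}^{\,\widetilde{y}}=\widetilde{z}^{\,-1}$ to computer calculations. That explicit construction and verification \emph{is} the theorem; without it you have a plan, not a proof. Worse, the constraint you build your ansatz around, $c\widetilde{y}=\widetilde{y}c^{-1}$ (i.e.\ $c^{\widetilde{y}}=c^{-1}$), is not one of the required relations: the presentation of $G$ demands $\widetilde{z}^{\,\widetilde{y}}=\widetilde{z}^{\,-1}$ for $\widetilde{z}=[\widetilde{y},a]$, which says nothing about conjugation of $c$ itself. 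In fact the unit that works, $\widetilde{y}=b(a+b+ab)c=b^2c+ba(1+b)c$, violates your condition: expanding in the normal form one gets $c\widetilde{y}+\widetilde{y}c^{-1}=b^2(1+c^2)\neq 0$. So your ansatz imposes a condition that is neither necessary nor known to be satisfiable and may steer you away from any solution. For comparison, the paper handles the conjugation relations without touching $c$ directly: it checks by a short hand computation that $a^2$ and $\widetilde{y}^{\,2}$ are central in $kH$ (writing $\widetilde{y}$ as the sum of the two commuting elements $b^2c$ and $ba(1+b)c$), which yields $\widetilde{z}^{\,a}=\widetilde{z}^{\,-1}$ and $\widetilde{z}^{\,\widetilde{y}}=\widetilde{z}^{\,-1}$ from $[\widetilde{y},a^2]=\widetilde{z}\,\widetilde{z}^{\,a}$ and $[\widetilde{y}^{\,2},a]=\widetilde{z}^{\,\widetilde{y}}\widetilde{z}$; the explicit central expression for $\widetilde{y}^{\,2}$ then gives $\widetilde{y}^{\,2^m}=1$ directly, and $\widetilde{z}^{\,4}=1$ follows from $\widetilde{z}\in 1+J$ with $J$ the ideal generated by $1+c$, which satisfies $J^4=0$. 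All of this is done by hand; no computer search or LAGUNA verification is needed, so invoking one is not an acceptable substitute for the missing construction.
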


\begin{proof}
We start observing that the derived subgroups $G'=\GEN{z}$ and $H'=\GEN{c}$ are cyclic groups of order 4, $x^2\in Z(G)$, $a^2\in Z(H)$, $xy\in C_G(G')$ and $b\in C_H(H')$. Then $C_G(G')=\GEN{z,x^2,xy}$ and $C_H(H')=\GEN{c,a^2,b}$ and these groups are abelian. As $x$ has order $2^n$ and $y^2$ has order at most $2^{n-2}$, from $(xy)^2 = x^2y^2z^{-1}$ it follows that $C_G(G')$ has exponent $2^n$. On the other hand $a^2$ has order $2^{n-1}$ and $b$ has order at most $2^{n-1}$, hence the exponent of $C_H(H')$ is $2^{n-1}$. Therefore, the exponents of $C_G(G')$ and $C_H(H')$ are different and, in particular, $G$ and $H$ are not isomorphic.
		
If $F$ is the prime field of $k$ then $kG\cong k\otimes_F FG$ and therefore to prove that $kG\cong kH$ we may assume, without loss of generality, that $k$ is the field with 2 elements. 
We will prove that $kG$ and $kH$ are isomorphic by identifying in $kH$ a group basis isomorphic to $G$. 
In $kH$, we set 
	$$\widetilde{G}=\GEN{\xt,\yt} \text{ where} \quad  \xt=a \qand \yt=b(a+b+ab)c.$$
We will verify that $\widetilde{G}\cong G$ and $\widetilde{G}$ is a basis of $kH$. 
We use the following notation: 
$$A=a+1, \quad B=b+1, \quad C=c+1 \qand Y=\yt+1.$$

Let $I$ denote the augmentation ideal of $kH$. Note that this ideal is nilpotent.
It follows from Jennings' Theorem \cite[Theorem III.1.22]{Seh78} that $H/\Phi(H)$ is isomorphic to the additive group of $I/I^2$ and that $A+I^2$ and $B+I^2$ form a basis of $I/I^2$. 
By \cite[4.1.3]{KP69}, the intersection of the maximal right ideals of $I$ equals $I^2$.
Observe that $C=b^{-1}a^{-1}(ba-ab)=b^{-1}a^{-1}(BA-AB)\in I^2$. 
 Hence $c\equiv 1 \mod I^2$. It follows that $\yt\equiv b(a+b+ab)\mod I^2$, so $b+\yt\equiv b(1+a)(1+b)\equiv 0 \mod I^2$. Thus $\yt\equiv b\mod I^2$.
 
Therefore the additive subgroup of $I/I^2$ is generated by $A+I^2$ and $Y+I^2$ and hence, by the Burnside Basis Theorem \cite[Theorem~4.1.4]{KP69}, we have that $A$ and $Y$ generate $I$ as a ring. 

We now verify that $\xt, \yt$ and $\zt=[\yt,\xt]$ satisfy the relations of $x$ and $y$ in the presentation of $G$ given above. Of course, $\xt^{2^n}=1$  and $\xt^2=a^2 \in Z(kH)$. 
The latter implies that $1=[\yt,\xt^2]=\zt \; \zt^{\xt}$, so that $\zt^{\xt}=\zt^{-1}$. 
Moreover, $b^2c$ belongs to the center of $H$.  
Thus $\yt=b^2c+ba(1+b)c$ is a sum of two commuting elements, and therefore 
\begin{eqnarray*}
\yt^2 &=& (b^2c)^2 + (ba(1+b)c)^2 = (b^2c)^2 + (ab(1+b)c^2)^2 = 
(b^2c)^2 + a^2b^2c(1+bc)(1+b) \\ 
&=& (b^2c)^2 + a^2(b^2c) + a^2(b^2c)^2 + a^2(b^2c)(b+bc) \in Z(kH)
\end{eqnarray*}
The latter is a sum of products of central elements of $kH$ because $a^2$ belongs to the center of $H$ and $\{b,bc\}$ is the conjugacy class of $b$ in $H$.    
Then we have $\zt^{\yt}=\zt^{-1}$, as above, and 
$$\yt^{2^m}=b^{2^{m+1}}c^{2^m}+a^{2^m}(b^{2^m}c^{2^{m-1}}+b^{2^{m+1}}c^{2^m})+a^{2^m}b^{2^m}c^{2^{m-1}}(b^{2^{m-1}}+b^{2^{m-1}}c^{2^{m-1}})=1.$$
Finally, let $J$ be the ideal of $kH$ generated by $C$. Then $kH/J$ is commutative and, as $\zt$ is a commutator in the unit group of $kH$ it follows that $\zt\in 1+J$. Then $1+\zt\in J$ and as $J^4=0$ it follows that $\zt^4=1$. 

We conclude that $\widetilde{G}$ is an epimorphic image of $G$ and $\widetilde{G}$ includes a basis of $kH$ as a $k$-vector space. 
As $|H|=|G|$ it follows that $\widetilde{G}\cong G$. 
\end{proof}


\textbf{Acknowledgments}: The second author would like to thank Mima Stanojkovski for many helpful conversations on the Modular Isomorphism Problem.

\bibliographystyle{amsalpha}
\bibliography{MIP}

\end{document}